\def\BState{\State\hskip-\ALG@thistlm}
\newtheorem{proposition}{Proposition}
\newtheorem{definition}{Definition}
\newtheorem{lemma}{Lemma}
\newenvironment{proof}[1][Proof]{\begin{trivlist}
\item[\hskip \labelsep {\bfseries #1}]}{\end{trivlist}}
\newenvironment{remark}[1][Remark]{\begin{trivlist}
\item[\hskip \labelsep {\bfseries #1}]}{\end{trivlist}}
\begin{document}
\begin{frontmatter}

\title{Distributed Coordinated Control of Large-Scale Nonlinear Networks\thanksref{footnoteinfo}} 

\thanks[footnoteinfo]{{This work was supported by the U.S. Department of Energy
through the LANL/LDRD Program.}}

\author[First]{Soumya Kundu} 
\author[Second]{Marian Anghel} 

\address[First]{Center for Nonlinear Studies and Information Sciences Group (CCS-3), Los Alamos National Laboratory, 
   Los Alamos, NM 87544 USA (e-mail: soumya@lanl.gov)}
\address[Second]{Information Sciences Group (CCS-3), Los Alamos National Laboratory, 
   Los Alamos, NM 87544 USA (e-mail: manghel@lanl.gov)}

\begin{abstract}                

We provide a distributed coordinated approach to the  stability analysis and control design of large-scale nonlinear dynamical systems by
using a vector Lyapunov functions approach.  In this formulation the large-scale system is decomposed into a network of interacting subsystems and the stability of the system is analyzed through a comparison system.  However finding such comparison system is not trivial. 
In this work, we propose a sum-of-squares based completely decentralized approach for computing the comparison systems for networks of nonlinear systems. Moreover, based on the comparison systems, we introduce a distributed optimal control strategy in which the individual 
subsystems (agents) coordinate with their immediate neighbors to design local control policies that can exponentially stabilize the full system under initial disturbances. 
We illustrate the control algorithm on a network of interacting Van der Pol systems. 

\end{abstract}

\begin{keyword}
Vector Lyapunov functions, comparison equations, sum-of-squares methods.
\end{keyword}

\end{frontmatter}



\section{Introduction}

Distributed coordinated control has  recently provided powerful control solutions
 when the conventional centralized methods fail due to inevitable communication constraints and limited computational capabilities. Paradigmatic examples are provided by  cooperative and coordinated control for
autonomous multi-agent systems (see \cite{Bullo:2009}) or large scale interconnected systems 
(see \cite{Siljak:2010}).
 Distributed coordinated control uses {\em local} communications between agents to achieve {\em global} objectives that reflect the desired behavior of the multi-agent system.
   Usually, a two-level hierarchical multi-agent system is employed, which consists of
upper level agent for implementing coordinated control and lower level
agents for implementing decentralized control.
In this paper, we propose to use this  conceptual framework to design
distributed coordinated control of
large scale interconnected system using vector Lyapunov functions (see \cite{Bellman:1962, Bailey:1966}) 
and comparison principles (see \cite{Brauer:1961, Beckenbach:1961}). 
The formulations using vector Lyapunov
functions are computationally very attractive because of their
parallel structure and scalability.  However computing these comparison equations, for a given
interconnected system, still remained a challenge.
In this work we use  sum-of-squares (SOS) methods to study the stability of an interconnected
system by computing  the vector Lyapunov functions as well as the comparison equations.
While this approach is applicable to any generic dynamical
system, we choose a randomly generated network of
modified\footnote{We choose the Van der Pol `oscillator' parameters in such a way that
these have a stable equilibrium at origin.} Van der Pol oscillators for illustration. 
This network is decomposed into many interacting
subsystems and each subsystem parameters are chosen so that
individually each subsystem is stable, when the disturbances
from neighbors are zero. SOS based expanding interior
algorithm (see \cite{Wloszek:2003, Anghel:2013}) is used to obtain estimate of region of
attraction as sub-level sets of polynomial Lyapunov functions
for each such subsystem. Finally SOS optimization is used to
compute the stabilizing control policies, based on linear comparison systems, such that the closed-loop network is exponentially stable under initial disturbances. 

Following some brief background
in Section\,\ref{S:background} we formulate the control design problem in Section\,\ref{S:problem}.
The sum-of-squares based distributed control algorithm is proposed in Section\,\ref{S:control}. In Section\,\ref{S:results} we illustrate the control design on a network of
Van der Pol systems, before concluding the article in Section\,\ref{S:concl}.

\section{Preliminaries}
\label{S:background}

\subsection{Stability and Control of Nonlinear Systems}
\label{S:Lyap}
Let us consider the dynamical systems of the form
\begin{align}\label{E:f}
&\dot{x}\left(t\right) = f\left(x\left(t\right)\right)+u_t\,,\quad t\geq 0\,,~~ f(0)=0\,,
\end{align}
where $x\!\in\!\mathbb{R}^n$ are the states, $u_t\!\in\!\mathbb{R}^n$ are the control input, $f:\mathbb{R}^n\rightarrow \mathbb{R}^n$ is locally Lipschitz and the origin is an equilibrium point\footnote{State variables can be shifted to move any equilibrium point to the origin.} of the `free' system, i.e. the system with no control ($u_t\equiv 0$). 
Let us first review the important concepts on stability of the equilibrium point of the `free' system.
\begin{definition}\label{D:stability}
The equilibrium point at the origin is called asymptotically stable in a domain $\mathcal{D}\!\subseteq\!\mathbb{R}^n,\,0\!\in\!\mathcal{D},$ if 
\begin{align*}
\left\|x(0)\!\right\|_2\ \!\!\!\!\in\!\! \mathcal{D}\implies\lim_{t\rightarrow\infty}\left\|x(t)\right\|_2\!=\!0\,,
\end{align*}
and it is exponentially stable if there exists $b,c \!>\! 0$ such that
\begin{align*}
\left\|x(0)\!\right\|_2\ \!\!\!\!\in\!\! \mathcal{D} \!\!\!\implies\!\!\!\! \left\|x(t)\right\|_2\!<\!ce^{-bt}\!\left\| x(0)\right\|_2~\forall t\!\geq\! 0\,.
\end{align*}
\end{definition}
Lyapunov's first or direct method (see \cite{Lyapunov:1892,Slotine:1991}) can give a sufficient condition of stability through the construction a certain positive definite function.

\begin{thm}\label{T:Lyap}
If there exists a domain $\mathcal{D}\!\!\in\!\!\mathbb{R}^n$, $0\!\!\in\!\!\mathcal{D}$, and a continuously differentiable positive definite function {$\tilde{V}\!\!:\!\!\mathbb{R}^n\!\rightarrow\! \mathbb{R}$}, called the `Lyapunov function' (LF), then the equilibrium point of the `free' system at the origin is asymptotically stable if $\nabla{\tilde{V}}^T\!\!f(x)$ is negative definite in $\mathcal{D}$, and is exponentially stable if $\nabla{\tilde{V}}^T\!\!f(x)\leq\!-c\, \tilde{V}~\forall x\!\in\!\mathcal{D}$, for some $c>0$.
\end{thm}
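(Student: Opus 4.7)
The plan is to prove the two implications separately, first establishing Lyapunov stability as a warm-up, then strengthening to asymptotic and to exponential stability. Since this is the classical direct method, I expect the proof to proceed by sublevel set arguments for the qualitative part and by a comparison/Gr\"onwall estimate for the quantitative (exponential) part.

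First I would establish standard Lyapunov stability: given any $\epsilon>0$ small enough that the closed ball $\overline{B}_\epsilon\subset\mathcal{D}$, set $\alpha=\min_{\|x\|_2=\epsilon}\tilde V(x)>0$ using positive definiteness, then by continuity of $\tilde V$ at $0$ pick $\delta>0$ with $\tilde V(x)<\alpha$ whenever $\|x\|_2<\delta$. Along any trajectory with $\|x(0)\|_2<\delta$ the chain rule gives $\frac{d}{dt}\tilde V(x(t))=\nabla\tilde V^{T}f(x(t))\le 0$, so $\tilde V(x(t))<\alpha$ for all $t\ge 0$, forcing $\|x(t)\|_2<\epsilon$. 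This already confines the trajectory to $\mathcal{D}$ and shows that $\tilde V(x(t))$ is nonincreasing and bounded below by $0$, hence converges to some limit $\ell\ge 0$.

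To upgrade to asymptotic stability, I would argue by contradiction that $\ell=0$. If $\ell>0$, then by positive definiteness $\|x(t)\|_2$ stays bounded away from some neighborhood of the origin, i.e.\ in a compact annulus where $-\nabla\tilde V^{T}f(x)$ has a strictly positive minimum $\gamma>0$. Integrating $\dot{\tilde V}\le-\gamma$ yields $\tilde V(x(t))\le\tilde V(x(0))-\gamma t$, which becomes negative in finite time, contradicting $\tilde V\ge 0$. Therefore $\tilde V(x(t))\to 0$, and positive definiteness then forces $\|x(t)\|_2\to 0$.

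For exponential stability, the condition $\nabla\tilde V^{T}f(x)\le-c\,\tilde V$ combined with the chain rule gives the scalar differential inequality $\frac{d}{dt}\tilde V(x(t))\le-c\,\tilde V(x(t))$, and the comparison lemma (or direct Gr\"onwall) yields $\tilde V(x(t))\le\tilde V(x(0))\,e^{-ct}$. The main obstacle, and the step that needs real care, is converting this exponential decay of $\tilde V$ into exponential decay of $\|x\|_2$ as required by Definition~1. This needs a sandwich bound $\alpha_1\|x\|_2^{2}\le\tilde V(x)\le\alpha_2\|x\|_2^{2}$ on some neighborhood of the origin, which follows from the hypotheses (continuous differentiability plus positive definiteness, together with the quadratic lower bound implicit in the exponential estimate) by, e.g., applying Taylor's theorem to $\tilde V$ about $0$ and using compactness of spheres in $\mathcal{D}$. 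Once this sandwich is in place, $\alpha_1\|x(t)\|_2^{2}\le\tilde V(x(0))e^{-ct}\le\alpha_2 e^{-ct}\|x(0)\|_2^{2}$ gives $\|x(t)\|_2\le\sqrt{\alpha_2/\alpha_1}\,e^{-ct/2}\|x(0)\|_2$, which is the required exponential bound with $b=c/2$ and $\kappa=\sqrt{\alpha_2/\alpha_1}$.
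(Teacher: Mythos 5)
The paper gives no proof of this theorem---it is quoted as a classical result (cited to Lyapunov and Slotine--Li)---so there is no in-paper argument to compare against; your proposal is the standard textbook route. The Lyapunov-stability and asymptotic-stability parts are correct (with the usual caveat that the argument yields convergence only from a sublevel set contained in $\mathcal{D}$, not from all of $\mathcal{D}$; the paper implicitly concedes this by immediately passing to the ROA estimate \eqref{E:ROA}).

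The genuine gap is in the exponential part, at exactly the step you flagged. The sandwich bound $\alpha_1\|x\|_2^{2}\le\tilde V(x)\le\alpha_2\|x\|_2^{2}$ does \emph{not} follow from continuous differentiability plus positive definiteness. Taylor's theorem to second order requires $\tilde V\in C^{2}$, which is not assumed, and even for smooth $\tilde V$ the Hessian at the origin may be singular: $\tilde V(x)=x^{4}$ on $\mathbb{R}$ is $C^{\infty}$ and positive definite but admits no lower bound $\alpha_1 x^{2}$ near $0$, while $\tilde V(x)=|x|^{3/2}$ is $C^{1}$ and positive definite but admits no upper bound $\alpha_2 x^{2}$. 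Moreover the defect is not only in your derivation of the sandwich: from $\tilde V(x_1,x_2)=x_1^{2}+x_2^{4}$ and the decay $\tilde V(x(t))\le\tilde V(x(0))e^{-ct}$ alone, a trajectory starting at $(\epsilon,0)$ could a priori reach $(0,\delta)$ with $\delta\le\epsilon^{1/2}e^{-ct/4}$, giving $\|x(t)\|_2/\|x(0)\|_2\le\epsilon^{-1/2}e^{-ct/4}$ with no uniform constant as $\epsilon\to 0$, so the normwise bound of Definition~\ref{D:stability} cannot be extracted from the hypotheses as stated. The standard repair is to assume in addition $k_1\|x\|_2^{a}\le\tilde V(x)\le k_2\|x\|_2^{a}$ for some $a>0$ (Khalil, Theorem 4.10), which gives $\|x(t)\|_2\le(k_2/k_1)^{1/a}e^{-(c/a)t}\|x(0)\|_2$; the quadratic Lyapunov functions the paper actually uses satisfy this with $a=2$, so the result holds in the setting where it is applied, but your proof as written does not close without that extra hypothesis.
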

When there exists such a $\tilde{V}\left(x\right)$, the region of attraction (ROA) of the equilibrium point at the origin can be estimated as
\begin{subequations}\label{E:ROA}
\begin{align}
\mathcal{R}:=&~\left\lbrace x\in\mathcal{D}\left|\, {V}(x)\leq 1\right.\right\rbrace,\\
\text{where,}~{V}(x) =&~{\tilde{V}(x)}/{\gamma^{max}},~\text{and}\\
	\gamma^{max}:=&~\arg\max_\gamma~\left\lbrace x\in\mathbb{R}^n\left| \tilde{V}(x)\leq\gamma\right.\right\rbrace \subseteq \mathcal{D}.
\end{align}
\end{subequations}

For systems under some control action $u_t$, the notion of `stabilizability' becomes important. Specifically, we are interested in state-feedback control of the form $u_t = u_t\left(x\right)$.

\begin{definition}\label{D:stabilizable}
The system \eqref{E:f} is called (exponentially) stabilizable if there exists a control policy $u_t\!=\!u_t\left(x\right)\!,\,\,t\geq 0$, such that the origin of the closed-loop system is (exponentially) stable, in which case $u_t$ is called a (exponentially) stabilizing control.
\end{definition}

Courtesy to the works of \cite{Artstein:1983} and \cite{Sontag:1989}, the concept of `control Lyapunov functions' has been useful in the context of stabilizability. 

\begin{definition}\label{D:CLF}
A continuously differentiable positive definite function $V_c:\mathbb{R}^n\rightarrow\mathbb{R}$ is called a `control Lyapunov function' (CLF) if for each $x\in\mathbb{R}^n\backslash\lbrace 0\rbrace$, there exists a control $u_t$ such that $\nabla V_c^T\!\!\left(f(x)+u_t\right)<0$.
\end{definition}

Similar definition holds for `exponentially stabilizing' CLFs (see \cite{Ames:2014, Zhang:2009}). CLFs can easily accommodate `optimality' in the control policies as well (see \cite{Freeman:2008}). However, as with the LFs, it is often very difficult to find a CLF for a given system.

\subsection{Sum-of-Squares and Positivstellensatz Theorem}
\label{subsec:SOSmethod}
In recent years, sum-of-squares (SOS) based optimization techniques have been successfully used in constructing LFs by restricting the search space to sum-of-squares polynomials (see \cite{Wloszek:2003,Parrilo:2000,Tan:2006,Anghel:2013}). Let us denote by $\mathbb{R}\left[x\right]$ the ring of all polynomials in $x\in\mathbb{R}^n$. Then,
{\begin{definition}
A multivariate polynomial {$p \in \mathbb{R}[x]$, $x\in\mathbb{R}^n$}, is called a sum-of-squares (SOS) if there exists $h_i\in\mathbb{R}[x]$, $i\in\left\lbrace 1,\dots,s\right\rbrace$, for some finite $s$, such that $p(x) = \sum_{i=1}^s h_i^2(x)$. 
Further, the ring of all such SOS polynomials is denoted by $\Sigma[x]$.
\end{definition}
Checking if $p\in\mathbb{R}[x]$ is an SOS is a semi-definite problem} which can be solved with a MATLAB$^\text{\textregistered}$ toolbox SOSTOOLS (see \cite{sostools13,Antonis:2005b}) along with a semidefinite programming solver such as SeDuMi (see \cite{Sturm:1999}).
{SOS technique can be used to search for polynomial LFs, by translating the conditions in Theorem\,\ref{T:Lyap} to equivalent SOS conditions (see \cite{Wloszek:2003,Wloszek:2005,Antonis:2005a}).}
An important result from algebraic geometry called Putinar's Positivstellensatz theorem\footnote{Refer to \cite{Lasserre:2009} for other versions of the Positivstellensatz theorem.} (see \cite{Putinar:1993,Lasserre:2009}) helps in translating the SOS conditions into SOS feasibility problems. 
\begin{thm}\label{T:Putinar}
Let $\mathcal{K}\!\!=\! \left\lbrace x\in\mathbb{R}^n\left\vert\, k_1(x) \geq 0\,, \dots , k_m(x)\geq 0\!\right.\right\rbrace$ be a compact set, where $k_j\!\in\!\mathbb{R}[x]$, $\forall j\in\left\lbrace 1,\dots,m\right\rbrace$. Suppose there exists a $\mu\!\in\! \left\lbrace \sigma_0 + {\sum}_{j=1}^m\sigma_j\,k_j \left\vert\, \sigma_0,\sigma_j \in \Sigma[x]\,,\forall j \right. \right\rbrace$ such that $\left\lbrace \left. x\in\mathbb{R}^n \right\vert\, \mu(x)\geq 0 \right\rbrace$ is compact. Then, if $p(x)\!>\!0~\forall x\!\in\!\!\mathcal{K}$, then $p \!\in\! \left\lbrace \sigma_0 \!\!+\!\! \sum_j\sigma_jk_j\!\!\left\vert\, \sigma_0,\sigma_j\!\!\in\!\Sigma[x],\forall j\!\right.\right\rbrace$.
\end{thm}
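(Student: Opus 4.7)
My plan is to argue by separation in the vector space $\mathbb{R}[x]$ and then to realize the separating linear functional as integration against a positive measure supported on $\mathcal{K}$. Throughout, write $M := \left\{\sigma_0 + \sum_{j=1}^m \sigma_j k_j \mid \sigma_0, \sigma_j \in \Sigma[x]\right\}$ for the quadratic module generated by the $k_j$; this is a convex cone in $\mathbb{R}[x]$ containing $1$.

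First I would recast the Archimedean hypothesis into the more convenient form that $N - \|x\|^2 \in M$ for some $N > 0$. This is available because the compact set $\{\mu \geq 0\}$ lies inside a closed ball of radius $\sqrt{N}$ for sufficiently large $N$, and a short SOS-manipulation transfers the compactness certificate from $\mu$ onto the polynomial $N - \|x\|^2$ itself.

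Next, suppose for contradiction that $p \notin M$. A Hahn--Banach-type separation argument applied to the convex cone $M$ and the point $p$ (working with an appropriate locally convex topology on $\mathbb{R}[x]$) produces a nonzero linear functional $L : \mathbb{R}[x] \to \mathbb{R}$ satisfying $L(q) \geq 0$ for every $q \in M$ and $L(p) \leq 0$. Since $1 \in M$ and $M$ is a cone, one may normalize so that $L(1) = 1$.

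The crucial step is then to realize $L$ as a moment functional. The Archimedean relation gives $L\!\left((N - \|x\|^2)^k\, x^\alpha\right) \geq 0$ together with its SOS variants for every multi-index $\alpha$ and every $k \in \mathbb{N}$, which yields the uniform moment-growth bounds needed to apply a Riesz--Haviland-type moment theorem. The outcome is a positive Borel measure $\nu$ on $\mathbb{R}^n$ with $L(q) = \int q\, d\nu$ for every $q \in \mathbb{R}[x]$. The inequalities $L(\sigma k_j) \geq 0$ for all $\sigma \in \Sigma[x]$ then force $\mathrm{supp}(\nu) \subseteq \mathcal{K}$. Since $p > 0$ throughout the compact set $\mathcal{K}$ and $\nu$ is a nonzero positive measure on $\mathcal{K}$, we would obtain $L(p) = \int_{\mathcal{K}} p\, d\nu > 0$, contradicting $L(p) \leq 0$; hence $p \in M$, as claimed. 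The main obstacle is precisely this moment-problem step: extracting a representing measure from a single Archimedean inequality and localizing its support onto $\mathcal{K}$ is where the bulk of the technical work — in the spirit of the Riesz--Haviland theorem — is concentrated, and everything else in the argument is a comparatively routine separation plus positivity check.
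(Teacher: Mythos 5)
This theorem is Putinar's Positivstellensatz, which the paper does not prove at all: it is quoted as a background result with citations to Putinar (1993) and Lasserre (2009). So there is no in-paper proof to compare against; the relevant comparison is with the standard argument in those references, and your outline is essentially that argument (separation of $p$ from the Archimedean quadratic module $M$, realization of the separating functional as a moment functional, support localization on $\mathcal{K}$, and the positivity contradiction). As a proof plan it is sound and is the canonical route.

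Three places where your sketch understates the work. First, the passage from ``some $\mu\in M$ has compact sublevel set $\{\mu\geq 0\}$'' to ``$N-\|x\|^2\in M$'' is not a short SOS manipulation: it is the nontrivial equivalence of the Archimedean conditions, usually obtained by applying Schm\"udgen's Positivstellensatz to the single-generator set $\{\mu\geq 0\}$ (noting that the preordering $\Sigma+\Sigma\mu$ sits inside $M$ because $\Sigma\cdot M\subseteq M$); this lemma is arguably as deep as the rest of the proof and should be cited or proved, not waved through. Second, normalizing $L(1)=1$ does not follow from ``$1\in M$'' alone, which only gives $L(1)\geq 0$; you need the Archimedean property again ($N\pm q\in M$ for all $q$ forces $|L(q)|\leq N\,L(1)$, so $L(1)=0$ would make $L$ vanish identically, contradicting that $L$ separates). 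Relatedly, the separation step itself needs $1$ to be an algebraic interior point of $M$ (which Archimedeanness supplies) so that Eidelheit--Kakutani applies. Third, be careful with the name Riesz--Haviland: Haviland's theorem in its literal form requires $L\geq 0$ on all polynomials nonnegative on $\mathcal{K}$, which is circular here. What you actually need is an existence-plus-determinacy result for the multidimensional moment problem under the Carleman-type bounds $L(x_i^{2k})\leq N^kL(1)$ (Nussbaum's theorem), or equivalently the GNS/spectral-theorem route where $N-x_i^2\in M$ makes the multiplication operators bounded and self-adjoint; the support localization $\mathrm{supp}(\nu)\subseteq\{k_j\geq 0\}$ also relies on this determinacy (or on the joint spectral measure), not merely on $L(\sigma k_j)\geq 0$. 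With these points made precise, your argument is the standard proof.
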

In many cases, especially for the $k_i\,\forall i$ used throughout this work, a $\mu$ satisfying the conditions in Theorem\,\ref{T:Putinar} is guaranteed to exist (see \cite{Lasserre:2009}), and need not be searched for.

\subsection{Linear Comparison Principle}
\label{subsec:comparison}
Before finishing this section, let us take a look at a nice result on the ordinary differential equations which helps form the framework of stability analysis of inter-connected systems via vector LFs. Noting that all the elements of the vector $e^{At},~ t\geq 0$, where $A=\left[a_{ij}\right]\in\mathbb{R}^{m\times m}$, are non-negative if and only if $a_{ij}\geq 0, i\neq j$, the authors in \cite{Beckenbach:1961,Bellman:1962} proposed the following result:
\begin{lemma}\label{L:comparison}
Let $A=[a_{ij}]\in\mathbb{R}^{m\times m}$ have only non-negative off-diagonal elements, i.e. $a_{ij}\geq 0,~i\neq j$. Then 
\begin{align}\label{E:comp_ineq}
\dot{v}(t)\leq A\,v(t), ~t\geq 0, ~v\in\mathbb{R}^n, ~v(0) = v_0, 
\end{align}
implies $v(t)\leq r(t),~\forall t\geq 0$, where 
\begin{align}\label{E:comp_eq}
\dot{r}(t)= A\,r(t), ~t\geq 0, ~r\in\mathbb{R}^n, ~r(0) = v(0) = v_0. 
\end{align}
\end{lemma}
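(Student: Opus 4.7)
The plan is to reduce the differential inequality to a statement about the sign of a solution of an inhomogeneous linear ODE and then invoke the nonnegativity of $e^{At}$ that is already highlighted in the excerpt. Concretely, I would set $w(t)\coloneqq r(t)-v(t)$, so that $w(0)=0$, and subtract the inequality from the equation to obtain
\begin{align*}
\dot{w}(t)=\dot{r}(t)-\dot{v}(t)\geq A\,r(t)-A\,v(t)=A\,w(t),\quad t\geq 0.
\end{align*}
The goal then becomes showing that $w(t)\geq 0$ componentwise for all $t\geq 0$.

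To do this, I would introduce the (componentwise) nonnegative slack function $g(t)\coloneqq \dot{w}(t)-A\,w(t)\geq 0$, which turns the inequality into the linear ODE $\dot{w}(t)=A\,w(t)+g(t)$ with $w(0)=0$. Variation of constants then yields
\begin{align*}
w(t)=\int_0^t e^{A(t-s)}\,g(s)\,ds,\quad t\geq 0.
\end{align*}
The excerpt already states that for a Metzler matrix $A$ (i.e., $a_{ij}\geq 0$ for $i\neq j$) all entries of $e^{A\tau}$ are nonnegative whenever $\tau\geq 0$. Since $t-s\geq 0$ on the domain of integration and $g(s)\geq 0$, the integrand is an entrywise nonnegative vector, hence so is the integral. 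Therefore $w(t)\geq 0$, i.e., $v(t)\leq r(t)$ for all $t\geq 0$, as desired.

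The main obstacle is conceptual rather than technical: recognizing that the essential content of the lemma is precisely the nonnegativity property of the exponential of a Metzler matrix, which the excerpt has already isolated. Once that observation is made, the proof is essentially a one-line application of variation of constants. A minor point to be careful about is the regularity of $v$ (the differential inequality is stated pointwise, so one must assume $v$ is differentiable, or at least absolutely continuous so that $g$ is locally integrable and the variation-of-constants formula applies); under the standing assumption that $v$ satisfies the inequality classically, no additional machinery is needed.
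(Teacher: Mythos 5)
Your proof is correct: setting $w=r-v$, writing the differential inequality as $\dot{w}=Aw+g$ with $g\geq 0$, and applying variation of constants together with the entrywise nonnegativity of $e^{A\tau}$ for Metzler $A$ is exactly the standard argument. The paper itself gives no proof (it only cites Beckenbach and Bellman), but the sentence immediately preceding the lemma isolates the nonnegativity of $e^{At}$ as the key fact, so your argument is precisely the intended one.
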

This result will henceforth be referred to as the `linear comparison principle' and the differential equation in \eqref{E:comp_eq} as the `comparison equation'.

\section{PROBLEM DESCRIPTION}\label{S:problem}
The problem of interest for this work is to find state-feedback control $u_t = u_t\left(x\right)$ that exponentially stabilizes a large nonlinear system \eqref{E:f}. One approach could be to find a suitable CLF (Definition\,\ref{D:CLF}), using computational methods, e.g. SOS technique. However, as noted in \cite{Antonis:2012}, such an approach will quickly become intractable as the system size increases. Instead, we seek distributed stabilizing control policies by modeling the large dynamical system as an interconnected network of $m$ ($\geq 2$) interacting subsystems,
\begin{subequations}\label{E:fi}
\begin{align}
\forall i =1,2,&\dots,m,\nonumber\\
\mathcal{S}_i:~&\dot{x}_i = f_i(x_i) + u_{t,i} + g_i(x), ~ x_i\!\in\!\mathbb{R}^{n_i}, ~x\!\in\!\mathbb{R}^n\\
&f_i({0})={0},\\
&g_i(\hat{x}_{i})={0},~\forall \hat{x}_{i}\in\left\lbrace x\in\mathbb{R}^n\!\left\vert ~x_j \!=\! 0,\forall j\!\neq\! i\right.\right\rbrace\\
\text{where,}~ x &= {\bigcup}_{j=1}^m\left\lbrace x_j\right\rbrace , ~\text{and}~n\leq{\sum}_{j=1}^m n_j\,.
\end{align}
\end{subequations}

We assume that the isolated `free' subsystem dynamics $f_i\in\mathbb{R}[x_i]^{n_i}$, and the neighbor interactions $g_i\in\mathbb{R}[x]^{n_i}$ are vectors of polynomials. Further, $u_{t,i}=u_{t,i}\left(x_i\right)$ is a time-dependent local state-feedback control policy, with each $u_{t,i}\in\mathbb{R}[x_i]^{n_i}\,\,\forall t$. It is assumed that the `free' isolated subsystems as well as the `free' full system are (locally) stable. Note that, we allow over-lapping decomposition in which subsystems can have common state(s) \cite{Siljak:1978,Jocic:1977}. Let 
\begin{subequations}\label{E:Ni}
\begin{align}
\mathcal{N}_i &:= \left\lbrace i\right\rbrace\cup\left\lbrace j\left\vert\begin{array}{l} g_i(x)\neq 0 ~\text{for some $x$}\\\text{with $x_k=0\,\forall k\neq i, j$}\end{array}  \right.\right\rbrace, \\
\text{and }~\bar{x_{i}} &:={\bigcup}_{j\in\mathcal{N}_i}\,\left\lbrace x_j\right\rbrace
\end{align}\end{subequations}
denote the set of indices of the subsystems in the neighborhood of $\mathcal{S}_i$ (including the subsystem itself) and the states that belong to this neighborhood, respectively.

The goal is to compute the distributed control $u_{t,i}(x_i)\,\forall i$ so that the full interconnected system \eqref{E:fi} is exponentially stabilizable.

\subsection{Comparison Equations and Exponential Stabiltiy}
\label{subsec:inter_stab}
Let us first review the stability of the `free' interconnected system, i.e. when $u_{t,i}\equiv 0\,\forall i$. Stability of each of the `free' isolated (i.e. zero neighbor interaction) subsystems
\begin{align}\label{E:fi_isol}
\forall  i \in\left\lbrace 1,2,\dots,m\right\rbrace,\quad &\dot{x}_i = f_i(x_i),~x_i\in\mathbb{R}^{n_i}  \, .
\end{align}
can be characterized by computing a polynomial LF $V_i (x_i)\,\forall i\,$, and the corresponding estimate of the ROA as in \eqref{E:ROA}.
An SOS based \textit{expanding interior algorithm}, (see \cite{Wloszek:2003,Anghel:2013}), is used to iteratively enlarge the estimate of the ROA by finding a `better' LF at each step of the algorithm. At the completion of this iterative algorithm, the stability of each `free' isolated subsystem \eqref{E:fi_isol} is quantified by its LF $V_i(x_i)$, with a corresponding estimate of the ROA as
 \begin{align}
\mathcal{R}_i^0:= \left\lbrace x_i\in\mathbb{R}^{n_i}\left| V_i(x_i)\leq 1\right.\right\rbrace,~\forall i=1,2,\dots,m\,.
 \end{align}
Let us further define the domain
\begin{align}\label{E:ROA_isol}
\mathcal{R}^{0} &:=\left\lbrace x\in\mathbb{R}^{n}\left| ~x_i\in\mathcal{R}_i^0,\,~ \forall i=1,2,\dots,m\right.\right\rbrace .
\end{align}
The equilibrium of the `free' network at the origin corresponds to the zero level-sets, $V_i(0)=0\,\,\forall i\,$, and any initial condition away from this equilibrium would result in positive level-sets $V_i(x_i(0))\!=\!\gamma_i^0\!\in\!\left(0,1\right]$ for some or all of the subsystems. 

An attractive and scalable approach for (exponential) stability analysis of the `free' network uses a vector LF (see \cite{Bellman:1962,Bailey:1966})
\begin{align}\label{E:vecLyap}
V(x) &:= \left[V_1(x_1)  ~~ V_2(x_2) ~~\dots ~~V_m(x_m)\right]^T
\end{align} 
to construct a linear comparison equation (Lemma\,\ref{L:comparison}) whose states are the subsystem LFs (see \cite{Siljak:1972,Weissenberger:1973,Araki:1978}). The aim is to seek an $A=[a_{ij}]\in\mathbb{R}^{m\times m}$ and a domain $\mathcal{D}\subset\mathcal{R}^0$, such that
\begin{subequations}\label{E:comparison}
\begin{align}
\left.\dot{V}(x)\right\vert&_{u_{t,i}\equiv 0\,\forall i}\,\leq~ AV(x),~\forall x\in\mathcal{D}\subset\mathcal{R}^0, \label{E:comparison_VAV}\\
\text{where,}\quad & a_{ij}\geq 0~\forall i\neq j\, ,\label{E:Metzler}\\
		 &  \text{$A=[a_{ij}]$ is Hurwitz, and} \label{E:Hurwitz}\\
		\quad & \text{$\mathcal{D}$ is invariant under the dynamics \eqref{E:f}\,,}\label{E:invariance}\\
\text{and} ~& \left.\dot{V}(x)\right\vert_{u_{t,i}\equiv 0\,\forall i} = \left[ \begin{array}{c}\nabla{V}_1^T\!\left(f_1(x_1)+g_1(x)\right)\\
						\vdots \\
						\nabla{V}_m^T\!\left(f_m(x_m)+g_m(x)\right) \end{array}\right]. \label{E:Vdot}
\end{align}
\end{subequations}
If there exist a `comparison matrix' $A=[a_{ij}]$ and $\mathcal{D}\subset\mathcal{R}^0$ satisfying \eqref{E:comparison}, then any $x(0)\in\mathcal{D}$ would guarantee exponential convergence of $V(x(t))$ to the origin thereby implying exponential convergence of the states themselves (see \cite{Siljak:1972}).

\subsection{Exponentially Stabilizing Control}
The comparison principle can be used to design distributed controllers $u_{t,i}(x_i)\,\forall i$ that exponentially stabilize the nonlinear network \eqref{E:fi}. In Section\,\ref{S:control}, we propose an SOS based algorithmic approach in which each of the subsystems $\mathcal{S}_i$ coordinates only with its immediate neighbors $\mathcal{S}_j\,,\,j\!\in\!\!\mathcal{N}_i\backslash\lbrace i\rbrace$, to compute a local and `optimal' stabilizing control $u_{t,i}\,$.

We propose that the LFs for each `free' (no control) and isolated (no interaction) subsystem \eqref{E:fi_isol} be pre-computed and communicated to the neighbors. Given any initial condition $x(0)\!\in\!\mathcal{R}^0$ we define the domain
\begin{align}\label{E:D}
\mathcal{D}:= \left\lbrace x\in\mathcal{R}^0\left\vert\, V_i(x_i)\leq V_i(x_i(0))\!=\!\gamma_i^0~~\forall i\right.\right\rbrace.
\end{align}
Then any distributed control $u_{t,i}(x_i)\,\forall i$ satisfying
\begin{subequations}\label{E:comparison_control}
\begin{align}
\dot{V}(x)\, &\leq~ AV(x),~\forall x\in\mathcal{D}\subset\mathcal{R}^0, \label{E:comparison_control_VAV}\\
\text{s.t.,}~~ & \text{conditions \eqref{E:Metzler}, \eqref{E:Hurwitz} and \eqref{E:invariance}\,,}\\
\text{where}\, ~& \dot{V}(x) = \left[ \begin{array}{c}\nabla{V}_1^T\!\left(f_1(x_1) + u_{t,1}(x_1)+g_1(x)\right)\\
						\vdots \\
						\nabla{V}_m^T\!\left(f_m(x_m) + u_{t,m}(x_m)+g_m(x)\right) \end{array}\right]. \label{E:Vdot_control}
\end{align}
\end{subequations}
is an exponentially stabilizing control policy. In addition to satisfying \eqref{E:comparison_control}, the `optimality' of the control could be ascertained by minimizing the applied control efforts.

\begin{remark}
Note that we do not explicitly compute a CLF (Definition\,\ref{D:CLF}), because of the computational burden in large-scale networks. Instead, we propose an algorithm to design stabilizing control using the pre-computed subsystem LFs.
\end{remark}

\section{Distributed Control Algorithm}
\label{S:control}
In designing the stabilizing control policies $u_{t,i}\,\forall i$ in \eqref{E:comparison_control} the conditions \eqref{E:Hurwitz} and \eqref{E:invariance} have to be satisfied, which essentially demands availability of network-level information. However, the following two key observation can be useful in generating equivalent subsystem-level conditions. 
\begin{proposition}\label{P:Gershgorin}
A matrix $A=[a_{ij}]\in\mathbb{R}^{m\times m}$ is Hurwitz if, for each $i\in\lbrace 1,2,\dots,m\rbrace$, $a_{ii}+\sum_{j\neq i}\left\vert a_{ij}\right\vert <0$\,.\footnote{In other words, a strictly diagonally-dominant matrix with negative diagonal entries is Hurwitz.}
\end{proposition}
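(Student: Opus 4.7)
The plan is to invoke the Gershgorin circle theorem, which states that every eigenvalue $\lambda$ of $A=[a_{ij}]$ lies in at least one of the Gershgorin discs
\[
D_i := \left\{ z \in \mathbb{C} : |z - a_{ii}| \leq R_i \right\}, \qquad R_i := \sum_{j \neq i} |a_{ij}|.
\]
First I would observe that the hypothesis $a_{ii} + R_i < 0$ says precisely that the rightmost point of each disc $D_i$ has strictly negative real part; since $D_i$ is a closed disc centered at the real number $a_{ii}$ with radius $R_i \geq 0$, it follows that $D_i \subset \{z \in \mathbb{C} : \mathrm{Re}(z) < 0\}$ for every $i$.

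Next I would use the fact that the spectrum of $A$ is contained in $\bigcup_{i=1}^m D_i$ to conclude that every eigenvalue $\lambda$ of $A$ satisfies $\mathrm{Re}(\lambda) < 0$, which is by definition what it means for $A$ to be Hurwitz.

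For a self-contained argument one can bypass the citation and re-derive the needed Gershgorin bound in two lines: pick any eigenpair $(\lambda, v)$ with $v \neq 0$, choose an index $k$ with $|v_k| = \max_j |v_j| > 0$, read the $k$-th row of $Av = \lambda v$ as
\[
(\lambda - a_{kk})\, v_k \;=\; \sum_{j \neq k} a_{kj}\, v_j,
\]
and apply the triangle inequality together with $|v_j| \leq |v_k|$ to obtain $|\lambda - a_{kk}| \leq R_k$. In particular $\mathrm{Re}(\lambda) \leq a_{kk} + R_k < 0$, as required.

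There is no real obstacle here beyond recognizing this as a direct corollary of Gershgorin; the only subtle point worth flagging is that the hypothesis controls the \emph{rightmost} edge of each disc (so one really does get $\mathrm{Re}(\lambda) < 0$, not merely a bound on $|\lambda|$), and that $R_i \geq 0$ ensures the disc's real axis extent is $[a_{ii} - R_i,\, a_{ii} + R_i]$, the upper endpoint of which is exactly the quantity constrained by the hypothesis.
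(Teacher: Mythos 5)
Your proposal is correct and follows essentially the same route as the paper, which likewise cites Gershgorin's circle theorem and deduces $\mathrm{Re}\{\lambda\}<0$ from $\sum_{j\neq k}|a_{kj}|<-a_{kk}$. The self-contained two-line derivation of the Gershgorin bound is a nice addition but does not change the argument.
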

\begin{proof}
From the Gershgorin's Circle theorem (see in \cite{Bell:1965,Gershgorin:1931}), for every eigenvalue $\lambda\in\mathbb{C}$ of the matrix $A=[a_{ij}]$, 
\begin{align*}
\exists\, k\in\left\lbrace 1,2,\dots,m\right\rbrace ~\text{such that,}~\left|\lambda - a_{kk}\right|&\leq {\sum}_{j\neq k}\left|a_{kj}\right|\,.
\end{align*}
Using $\sum_{j\neq k}\left|a_{kj}\right|<-a_{kk}$, it follows that $\text{Re}\lbrace \lambda\rbrace< 0$.\hfill\hfill\qed
\end{proof}
Additionally, we also note that (see \cite{Weissenberger:1973}),
\begin{proposition}\label{P:invariance}
The domain $\mathcal{D}$ in \eqref{E:D} is invariant if $\sum_{j=1}^m a_{ij}\,\gamma_i^o\!\leq\!0$, where $A=[a_{ij}]$ satisfies the comparison equation \eqref{E:comparison_control_VAV}.
\end{proposition}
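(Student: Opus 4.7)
The plan is a componentwise Nagumo-type first-exit argument, exploiting the Metzler structure $a_{ij}\geq 0$ for $j\neq i$ from \eqref{E:Metzler} together with the defining product-of-sublevel-sets form of $\mathcal{D}$.

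First, I would observe that $\mathcal{D}$ is the intersection of the $m$ smooth scalar sublevel sets $\mathcal{D}_i:=\{x\in\mathcal{R}^0\,|\,V_i(x_i)\leq\gamma_i^0\}$. To establish forward invariance of $\mathcal{D}$ under the closed-loop dynamics \eqref{E:fi}, it suffices by a standard subtangential argument to verify: at every boundary point $x^\star\in\partial\mathcal{D}$ and for every index $i$ with $V_i(x_i^\star)=\gamma_i^0$ (an \emph{active} constraint), the closed-loop trajectory satisfies $\dot V_i(x_i^\star)\leq 0$. Equivalently, no active constraint can be strictly violated in forward time.

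Second, I would bound $\dot V_i(x_i^\star)$ using the $i$-th row of the closed-loop comparison inequality \eqref{E:comparison_control_VAV}, which gives $\dot V_i(x_i^\star)\leq a_{ii}V_i(x_i^\star)+\sum_{j\neq i}a_{ij}V_j(x_j^\star)$. At $x^\star$, the active constraint yields $V_i(x_i^\star)=\gamma_i^0$, and membership $x^\star\in\mathcal{D}$ yields $V_j(x_j^\star)\leq \gamma_j^0$ for every $j\neq i$. Combining these bounds with $a_{ij}\geq 0$ for $j\neq i$ gives
\begin{align*}
\dot V_i(x_i^\star)~\leq~a_{ii}\gamma_i^0+\sum_{j\neq i}a_{ij}\gamma_j^0~=~\sum_{j=1}^m a_{ij}\gamma_j^0~\leq~0,
\end{align*}
where the last inequality is the hypothesis of the proposition (read with the natural index $j$ on $\gamma^0$ inside the sum). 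This is exactly the subtangential condition, so $\mathcal{D}$ is forward invariant.

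The only mild obstacle is the non-strict nature of the hypothesis: since $\sum_j a_{ij}\gamma_j^0\leq 0$ rather than $<0$, trajectories may slide along a boundary face $\{V_i=\gamma_i^0\}$ instead of being pushed strictly inward. One must therefore invoke the weak-inequality version of Nagumo's theorem, which is applicable because each $V_i$ is smooth and the closed-loop vector field is locally Lipschitz; equivalently one can argue directly that $\tfrac{d}{dt}V_i(x_i(t))\leq 0$ whenever $V_i(x_i(t))=\gamma_i^0$ forces $V_i(x_i(t))\leq\gamma_i^0$ for all $t\geq 0$. No further technical wrinkle arises, and the componentwise nature of the argument is what makes it compatible with the distributed design of $u_{t,i}$.
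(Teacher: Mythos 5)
Your proof is correct and follows essentially the same route as the paper's: evaluate the $i$-th row of the comparison inequality at a point where $V_i=\gamma_i^0$ is active, use $a_{ij}\geq 0$ for $j\neq i$ together with $V_j\leq\gamma_j^0$ to conclude $\dot V_i\leq\sum_j a_{ij}\gamma_j^0\leq 0$, and deduce that trajectories cannot cross the boundary. Your explicit attention to the non-strict inequality (the weak-inequality Nagumo argument) is a point the paper passes over silently, and your reading of the hypothesis with index $j$ on $\gamma^0$ is indeed the intended one.
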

\begin{proof}
We note that whenever $V_i(x_i(\tau))=\gamma_i^0$, for some $i\,$, and $V_k(x_k(\tau))\leq\gamma_k^0~\forall k\!\neq\!i\,$, for some $\tau\geq 0$, we have
\begin{align*}
\left.\dot{V}_i\left(x_i\right)\right\vert_{t=\tau} &\leq a_{ii}\gamma_i^0 + {\sum}_{k\neq i} a_{ik}V_k\left(x_k(\tau)\right) \leq 0\,.
\end{align*}
i.e. the (piecewise continuous) trajectories can never cross the boundaries defined as $\left\lbrace x\in\mathcal{D} \left\vert ~V_i\left(x_i\right)=\gamma_i^0~\forall i\right.\right\rbrace\,$. \hfill\hfill\qed
\end{proof}

Propositions\,\ref{P:Gershgorin} and \ref{P:invariance} can be used to replace the network-level conditions \eqref{E:Hurwitz} and \eqref{E:invariance}, respectively, by their equivalent decentralized, albeit more conservative, conditions to facilitate design of distributed control policies $u_{t,i}\,\forall i\,$ that satisfy
\begin{subequations}\label{E:control}
\begin{align}
\forall i: &~\nabla V_i^T\!\!\!\left(f_i(x_i) \!+\! u_{t,i}(x_i) \!+\! g_i(x)\!\right) \!\leq\! \!\sum_{j\in\mathcal{N}_i}\!\!a_{ij} V_j(x_j\!)~~\forall x\!\in\!\mathcal{D}, \\
	&\text{subject to:}~\left\lbrace\begin{array}{l} a_{ij}\geq 0~~\forall j\!\in\!\!\mathcal{N}_i\backslash\lbrace i\rbrace\,,\\ 									{\sum}_{j\!\in\!\mathcal{N}_i}\,a_{ij}<0\,,~\text{and}\\
								{\sum}_{j\!\in\!\mathcal{N}_i}\,a_{ij}\,\gamma_j^0\leq 0\,.\end{array}\right.
\end{align}\end{subequations}
Note that, $a_{ij}\!=\!0~\forall j\!\notin\!\!\mathcal{N}_i\,$. Using the Positivstellensatz theorem (Theorem~\ref{T:Putinar}), with $k_i=\!\left(\gamma_i^0\!\!-\!V_i(x_i)\right)~\forall i\,$, and $\mathcal{K}\!\!=\!\mathcal{D}$, we can cast \eqref{E:control} into a set of SOS feasibility problems, for each $i\,$,
\begin{subequations}\label{E:control_SOS}
\begin{align}
-\!\nabla V_i^T\!\!\left(f_i\!+ u_{t,i} +\! g_i\right) + \!\!\!\sum_{j\in\mathcal{N}_i} \!\!\left(a_{ij}V_j \!- \sigma_{ij}\!\left(\gamma_j^0\!-\!V_j \right)\!\right) \!\in \Sigma[\bar{x}_i], & \\
 -{\sum}_{j\!\in\!\mathcal{N}_i}\,a_{ij}\in\Sigma[0]\,, &\\
 ~\text{and}~ \,-{\sum}_{j\!\in\!\mathcal{N}_i}\,a_{ij}\,\gamma_j^0\in\Sigma[0]\,, &\\
~\text{where}~\,u_{t,i}\in\mathbb{R}[x_i]^{n_i},~\sigma_{ij}\!\in\!\Sigma[\bar{x}_i]~\forall \!j\!\in\!\mathcal{N}_i\,,&\\
a_{ii}\!\in\!\mathbb{R}[0]\,,~\text{and}~\, a_{ij}\!\in\!\Sigma[0]~\forall\! j\!\in\!\mathcal{N}_i\backslash\lbrace i\rbrace\,. &
\end{align}\end{subequations}
Here $\mathbb{R}[0]$ denotes scalar variables, $\Sigma[0]$ denotes non-negative scalar variables and $\bar{x}_i$ were defined in \eqref{E:Ni}.

The set of SOS conditions \eqref{E:control_SOS} defines the control $u_{t,i}\in\mathbb{R}[x_i]^{n_i}$ as an $n_i$-vector of polynomials in $x_i$, of a chosen degree. But further restrictions can be imposed on the control design. In this work, we consider bounded control signals of the form
\begin{subequations}\label{E:u_bound}
\begin{align}
\forall i: \quad &\left\vert\, u_{t,i,k}(x_i)\right\vert \leq \bar{U}_{i,k}\quad\forall t\geq 0\,,~\forall k\!\in\!\lbrace 1,2,\dots,n_i\rbrace\\\
		\text{where},~ & \left\lbrace\begin{array}{l} u_{t,i} = [\,u_{t,i,1}\,~u_{t,i,2}\,~\dots~\,u_{t,i,n_i}\,]^T,\\
			\bar{U}_{i,k} \geq 0 \quad \forall k\!\in\!\lbrace 1,2,\dots,n_i\rbrace \,. \end{array}\right.
\end{align}\end{subequations}
For the uncontrolled states, we set the corresponding control bounds to zero. Further, by declaring these bounds as design variables the control problem can be formulated as a minimization of the maximal control efforts as,
\begin{subequations}\label{E:control_optimal}
\begin{align}
\forall i: ~& \underset{u_{t,i,k}\,,~a_{ij}}{\text{minimize}} ~~{\sum}_{k=1}^{n_i}\bar{U}_{i,k} \\
	\text{s.t.,}~~ &\text{conditions \eqref{E:control_SOS}, }\\
	&			\bar{U}_{i,k}\!-u_{t,i,k} -\! {\sigma}_{i,k}^{up}\! \left(\gamma_i^0 \!-\! V_i\right) \in \Sigma[x_i]\,,~\forall k\!\in\!\lbrace 1,\!\dots\!, n_i\rbrace,\\
	& 	\bar{U}_{i,k}\!+u_{t,i,k} -\! {\sigma}_{i,k}^{low}\! \left(\gamma_i^0 \!-\! V_i\!\right) \in \Sigma[x_i]\,,~\forall k\!\in\!\!\lbrace 1,\!\dots\!, n_i\rbrace, \\
	& \text{where,}~\, \bar{U}_{i,k}~\left\lbrace \begin{array}{ll}\! =0\,, \!&\text{for the uncontrolled states, }\\
							 	\!\in\Sigma[0]\,, \!&\text{for the controlled states.}\end{array}\!\right. \\
	&\text{and}~\sigma_{i,k}^{up/low}\!\in\Sigma[x_i]~\forall k\in\lbrace 1,2,\dots,n_i\rbrace\,.
							 \end{align}\end{subequations}
Given a choice of the degree of the control polynomials and an initial condition, \eqref{E:control_optimal} can be solved to find optimal, distributed, and exponentially stabilizing control policies. Algorithm\,\ref{ALG:control} outlines the major steps in the proposed control design procedure. 

It should be noted that for the subsystems that do not need to apply control the solution of the optimization \eqref{E:control_optimal} would result in $\bar{U}_{i,k}=0\,\forall k\!\in\!\lbrace 1,\!\dots\!,n_i\rbrace$. 

\begin{algorithm}
\caption{Distributed Stabilizing Control Design}
\label{ALG:control}
\begin{algorithmic}[]
\Procedure{One-time Computation}{}
\For{each subsystem $i\in\left\lbrace 1,2,\dots,m\right\rbrace$}
\State {Compute the LF $V_i(x_i)$ based on \eqref{E:fi_isol}}
\State {Communicate $V_i$ to neighbors $\mathcal{S}_j~\forall \!j\!\in\!\mathcal{N}_i\backslash\lbrace i\rbrace$}
\State {Receive and store the LFs $V_j(x_j)~\forall\! j\!\in\!\mathcal{N}_i$}
\EndFor
\EndProcedure
\end{algorithmic}

\begin{algorithmic}[]
\Procedure{Real-time Computation}{}
\For{each subsystem $i\in\left\lbrace 1,2,\dots,m\right\rbrace$}
\State {Compute initial level-set $\gamma_i^0 = V_i(x_i(0))$}
\State {Communicate $\gamma_i^0$ to neighbors $\mathcal{S}_j~\forall \!j\!\in\!\mathcal{N}_i\backslash\lbrace i\rbrace$}
\State {Receive $\gamma_j^0~\forall \!j\!\in\!\mathcal{N}_i\backslash\lbrace i\rbrace$ from neighbors }
\State {Solve \eqref{E:control_optimal} for the optimal control input $u_{t,i}(x_i)$ }
\EndFor
\EndProcedure
\end{algorithmic}
\end{algorithm}

\begin{remark}
Often in practical scenarios, the control bounds need to be strictly imposed due to physical considerations, in which case the degree of the control polynomials can be varied to find feasible control policies. 
\end{remark}

\section{Example}
\label{S:results}

 {W}{e} consider a network of nine Van der Pol `oscillators' (see \cite{van:1926}), with parameters of each oscillator chosen to make them individually (exponentially) stable (without the control). Each Van der Pol oscillator is treated as an individual subsystem, with the interconnections as shown below,
\begin{align}
\begin{array}{lll}
\mathcal{N}_1:\left\lbrace 1, 2, 5, 9\right\rbrace & \mathcal{N}_2:\left\lbrace 2, 1, 3\right\rbrace & \mathcal{N}_3:\left\lbrace 3, 2, 8\right\rbrace\\ 
\mathcal{N}_4:\left\lbrace 4, 6, 7\right\rbrace & \mathcal{N}_5:\left\lbrace 5, 1, 6\right\rbrace & \mathcal{N}_6:\left\lbrace 6, 4, 5\right\rbrace\\
\mathcal{N}_7:\left\lbrace 7, 4, 8, 9\right\rbrace & \mathcal{N}_8:\left\lbrace 8, 3, 7\right\rbrace & \mathcal{N}_9:\left\lbrace 9, 1, 7\right\rbrace\,.
\end{array}
\end{align}
Each subsystem $\mathcal{S}_i~\forall i\!\in\!\lbrace 1,2,\dots,9\rbrace$ has two state variables, $x_i=\left[\,x_{i,1}~\,x_{i,2}\,\right]^T$. 
The subsystem dynamics, under the presence of the neighbor interactions and control input, is given by 
 \begin{subequations}
\begin{align*}
\mathcal{S}_i:~	&\dot{x}_{i,1}= x_{i,2}\,, \\
	&\dot{x}_{i,2}=\alpha_i\,x_{i,2}\!\left(1\!-\!x_{i,1}^2\!\right) \!-\! x_{i,1} \!+ u_{t,i,2}+x_{i,1}\!\!\!\!\!\!\sum_{k\in\mathcal{N}_i\backslash\left\lbrace i\right\rbrace}\!\!\!\!\!\!\beta_{ik}\,x_{k,2} \,.
\end{align*}
\end{subequations}
where the subsystem parameters $\alpha_i\!\in\![-\!2\,,-\!1]\,\,\forall i$ and the interaction parameters $\beta_{ik}\!\in\![-0.8\,,\,0.8]~\forall i\,,\forall k\!\in\!\mathcal{N}_i\backslash\left\lbrace i\right\rbrace$, are chosen randomly. Note that, we have considered $u_{t,i,1}\equiv 0~\forall t\,\forall i\,$, i.e. the state variables $x_{i,1}\,\forall i\,$ are not (directly) controlled. 

The goal is to apply the Algorithm\,\ref{ALG:control} to compute distributed optimal controllers $u_{t,i,2}(x_{i,1},x_{i,2})~\forall i$ that guarantee exponential stabilization of the network of Van der Pol systems.

 \subsection{Pre-Computation of Lyapunov Functions}
At first, we compute polynomial Lyapunov functions for the isolated (interaction free) and control-free subsystems
 \begin{subequations}\label{E:}
\begin{align}
\forall i:~	&\dot{x}_{i,1}= x_{i,2}\,, \\
	&\dot{x}_{i,2}=\alpha_i\,x_{i,2}\left(1\!-\!x_{i,1}^2\!\right) \!-\! x_{i,1} \,,
\end{align}
\end{subequations}
using the \textit{expanding interior algorithm} (Section\,\ref{subsec:inter_stab}). As an example, we show a quadratic Lyapunov function and the associated estimate of the ROA of the interaction-free and control-free subsystem $\mathcal{S}_9$, 
\begin{subequations}\label{E:ROA_9}\begin{align}
\mathcal{R}_9^0&=\left\lbrace \left(x_{9,1},x_{9,2}\right)\left\vert~ V_9\leq 1\right.\right\rbrace, \\
\text{where,}~V_9&=0.595\,x_{9,1}^2 + 0.227\,x_{9,1}\,x_{9,2} + 0.520\,x_{9,2}^2\,.
\end{align}\end{subequations}
Fig.\,\ref{F:ROAcompare} shows a comparison of the estimated ROA using the quadratic LF in \eqref{E:ROA_9}, another estimate using a quartic LF and the `true' ROA computed numerically by simulating the isolated and free dynamics. Clearly, the estimate improves with higher order LFs. However, for computational ease, the rest of the analysis will be based on quadratic LFs. 

\begin{figure}[thpb]
      \centering
	\includegraphics[scale=0.5]{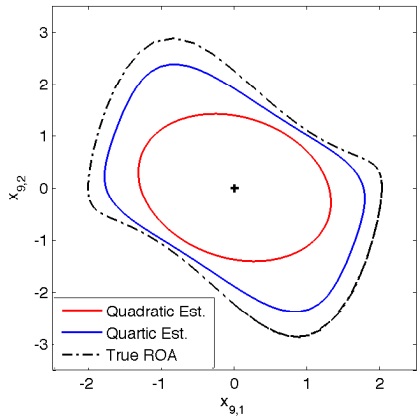}
      \caption{Estimated ROA for isolated and free subsystem $\mathcal{S}_9$.}
      \label{F:ROAcompare}
   \end{figure}   
   
Note that these LFs are computed only once for the network, and stored to be used for real-time control design. 

 \subsection{Controller Design: Test Case}
 \begin{figure*}[thpb]
\centering
\subfigure[]{
\includegraphics[scale=0.5]{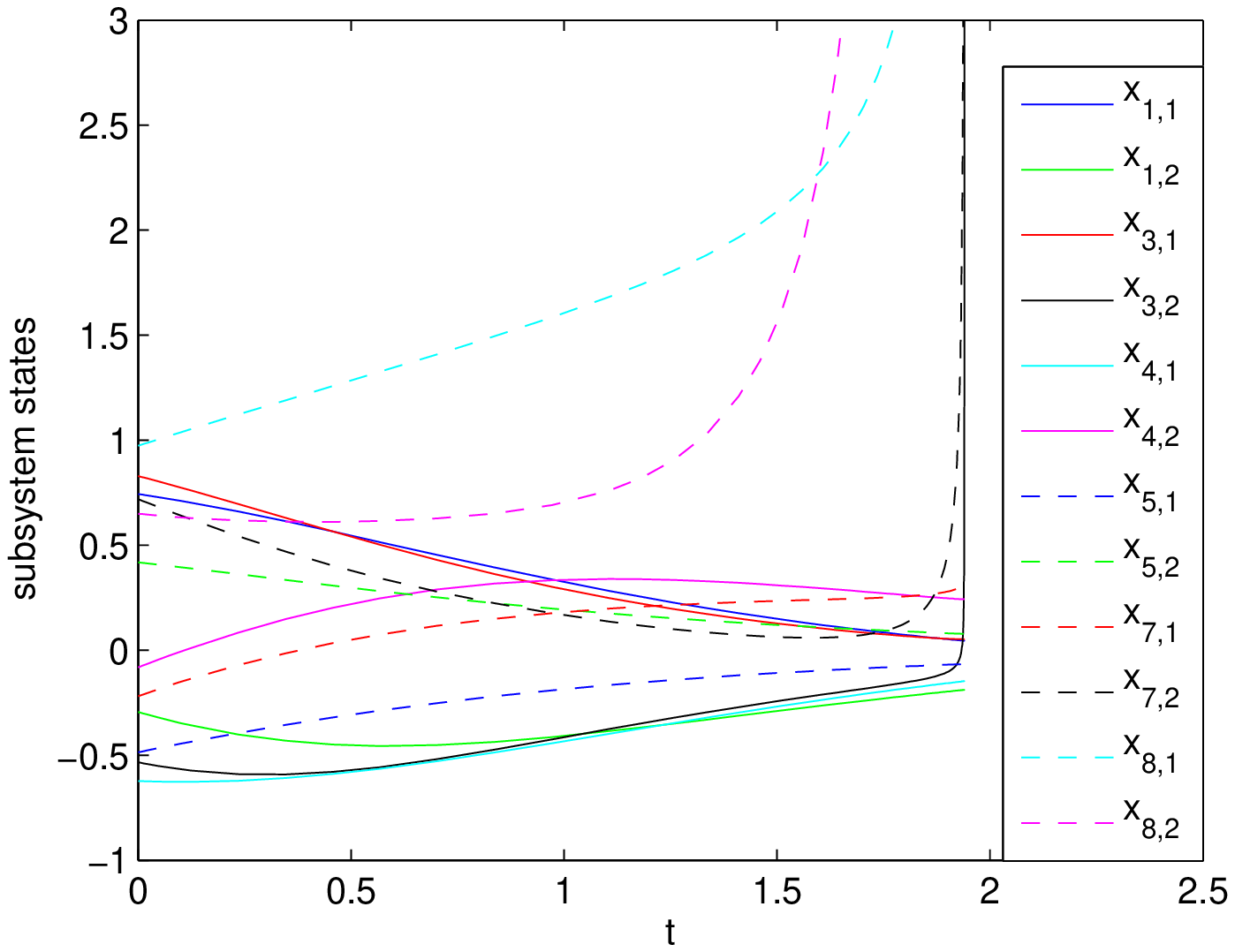}\label{F:states_42}
}
\subfigure[]{
\includegraphics[scale=0.5]{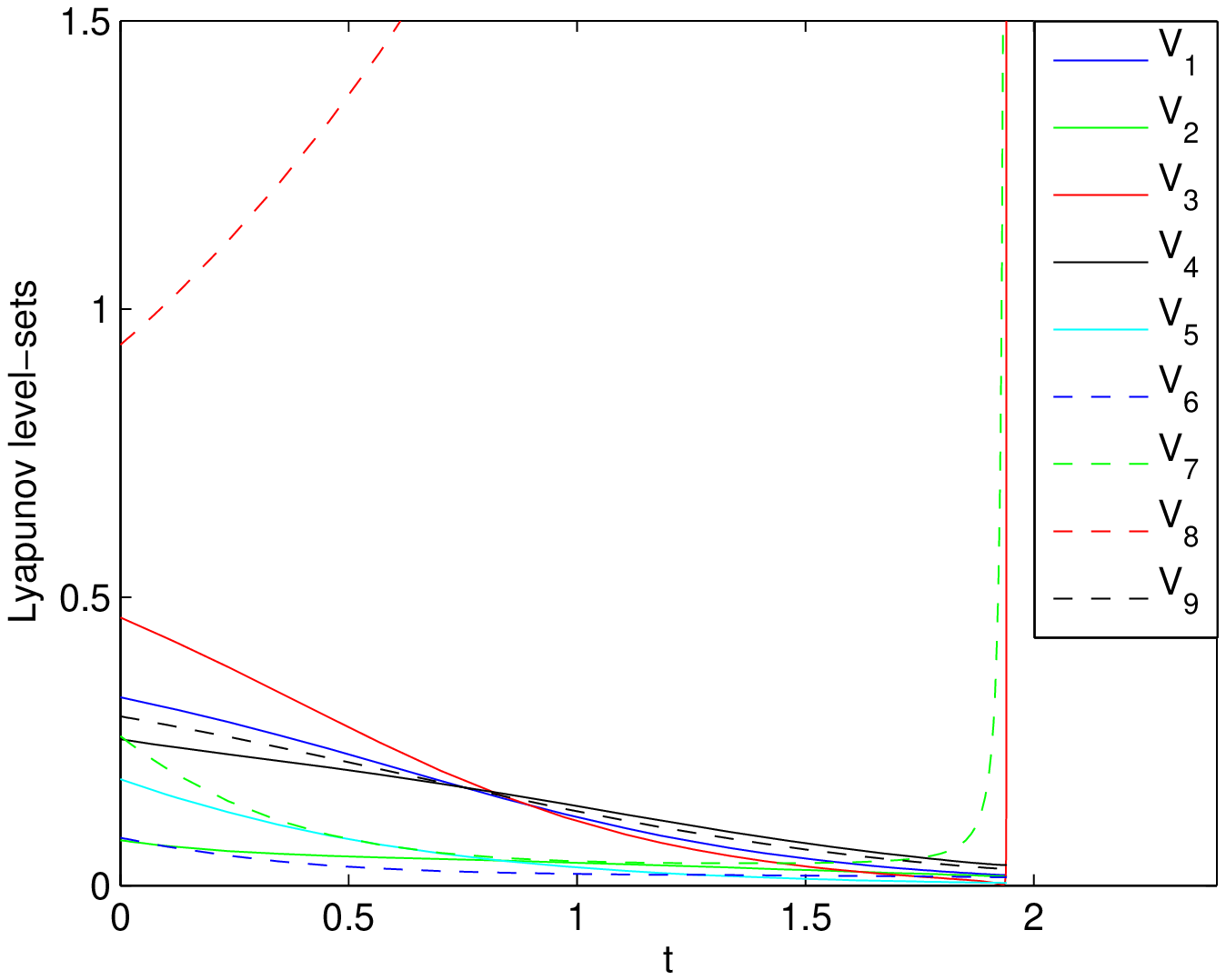}\label{F:level_42}
}
\caption[Optional caption for list of figures]{System states (selected) and Lyapunov functions starting from an unstable initial condition, without any control.}
\label{F:pre_control}
\end{figure*}
Figure\,\ref{F:pre_control} shows the evolution of the system state variables (belonging to subsystems $\mathcal{S}_1\,,\,\mathcal{S}_3\,,\,\mathcal{S}_4\,,\,\mathcal{S}_5\,,\,\mathcal{S}_7$ and $\mathcal{S}_8$) and the subsystem LFs, starting from an unstable initial condition. In particular, the state variables belonging to the subsystems $\mathcal{S}_3\,,\,\mathcal{S}_7$ and $\mathcal{S}_8$ `escape' to infinity while other subsystems remain reasonably bounded, over the shown time window.

Algorithm\,\ref{ALG:control} is used to compute distributed stabilizing linear controllers (with $\bar{U}_{i,1}=0~\forall i$), satisfying \eqref{E:control_SOS}. Table\,\ref{Tab:control} lists the results, while the trajectories after applying control are shown in Fig.\,\ref{F:post_control}. Interestingly, even though $\mathcal{S}_3$ was unbounded without control (Fig.\,\ref{F:pre_control}), the algorithm finds that there is actually no need for control in $\mathcal{S}_3$ provided its neighbors $\mathcal{S}_2$ and $\mathcal{S}_8$ remain bounded by their initial level-sets (Fig.\,\ref{F:post_control}). On the other hand, $\mathcal{S}_1$ and $\mathcal{S}_4$ apply control, although they were bounded for over $t\!\in\![0,2)$ without control (Fig.\,\ref{F:pre_control}).

The distributed control design is, however, conservative. For example, the maximum row-sum of the resulting comparison matrix (with control) is only marginally negative (Table\,\ref{Tab:control}), while its maximum eigenvalue  actually turns out to be $-0.06$. 

\begin{table}[hb]
\caption{Distributed Control Results}
\label{Tab:control}
\begin{center}
\begin{tabular}{|c|c|c|c|c|c|}
\hline
$i$ & $\gamma_i^0$ 	& $\sum_{j=1}^9 a_{ij}$ & $\sum_{j=1}^9 a_{ij}\gamma_j^0$ & $\bar{U}_{i,2}$  & $u_{t,i,2}\left(x_{i,1},\,x_{i,2}\right)$\\
\hline
1	 & 0.33	 & -0.000	 & -0.029	 & 0.17	 & $- 0.22\,x_{1,1} - 0.10\,x_{1,2}$	 \\ 
 \hline

2	 & 0.08	 & -0.140	 & -0.000	 & 0.00	 & ---	  \\ 
 \hline

3	 & 0.46	 & -0.016	 & -0.005	 & 0.00	 & ---	  \\ 
 \hline

4	 & 0.25	 & -0.000	 & -0.011	 & 0.12	 & $- 0.18\,x_{4,1} - 0.08\,x_{4,2}$	  \\ 
 \hline

5	 & 0.18	 & -0.048	 & -0.007	 & 0.00	 & ---	 \\ 
 \hline

6	 & 0.08	 & -0.101	 & -0.000	 & 0.00	 & ---  \\ 
 \hline

7	 & 0.26	 & -0.094	 & -0.000	 & 0.65	 & $- 0.47\,x_{7,1} - 0.89\,x_{7,2}$	  \\ 
 \hline

8	 & 0.94	 & -0.000	 & -0.109	 & 3.88	 & $- 0.56\,x_{8,1} - 2.89\,x_{8,2}$	  \\ 
 \hline

9	 & 0.29	 & -0.020	 & -0.005	 & 0.00	 & ---  \\ 
 \hline
\end{tabular}
\end{center}
\end{table}

 \begin{figure*}[thpb]
\centering
\subfigure[]{
\includegraphics[scale=0.5]{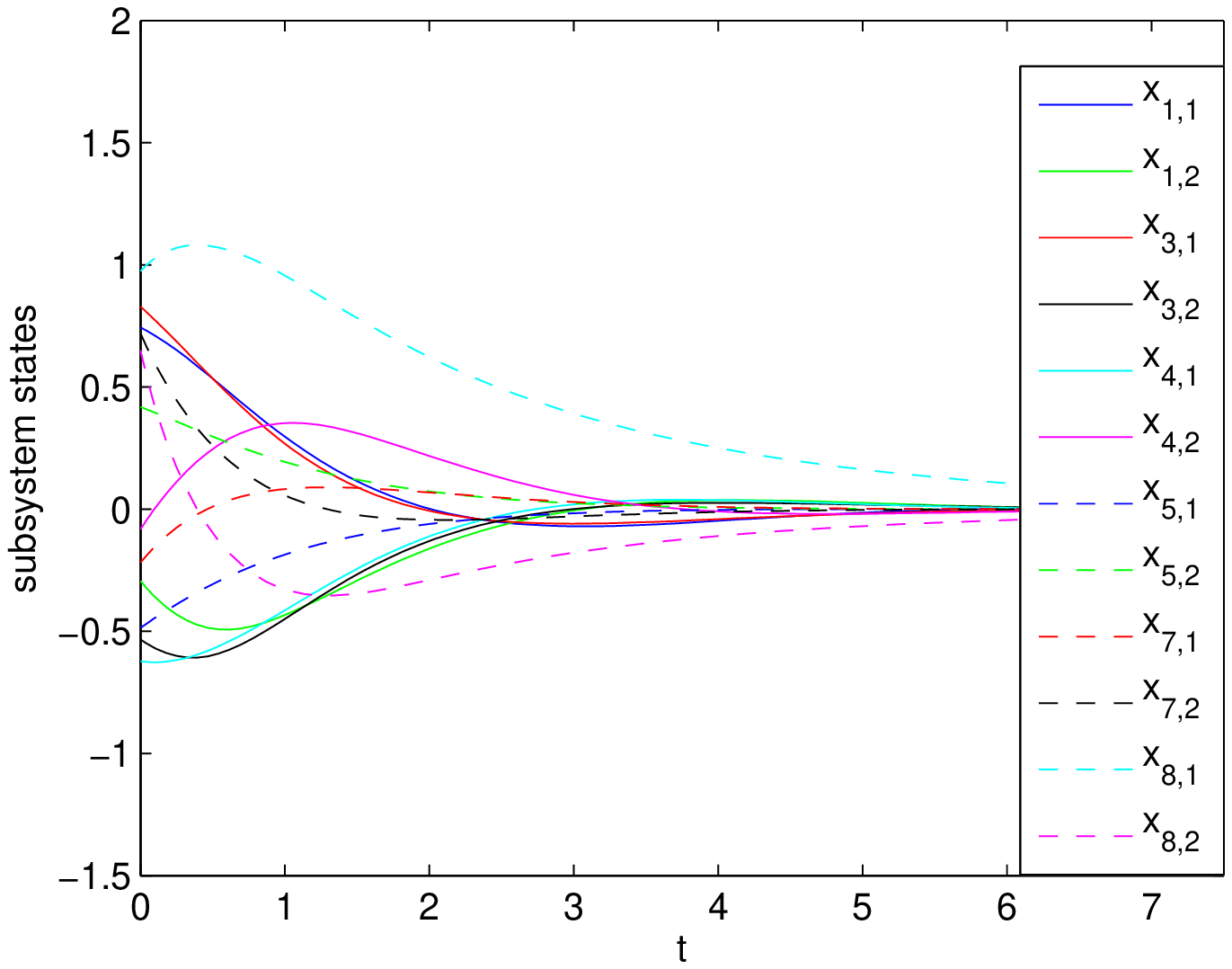}\label{F:states_42_control}
}
\subfigure[]{
\includegraphics[scale=0.5]{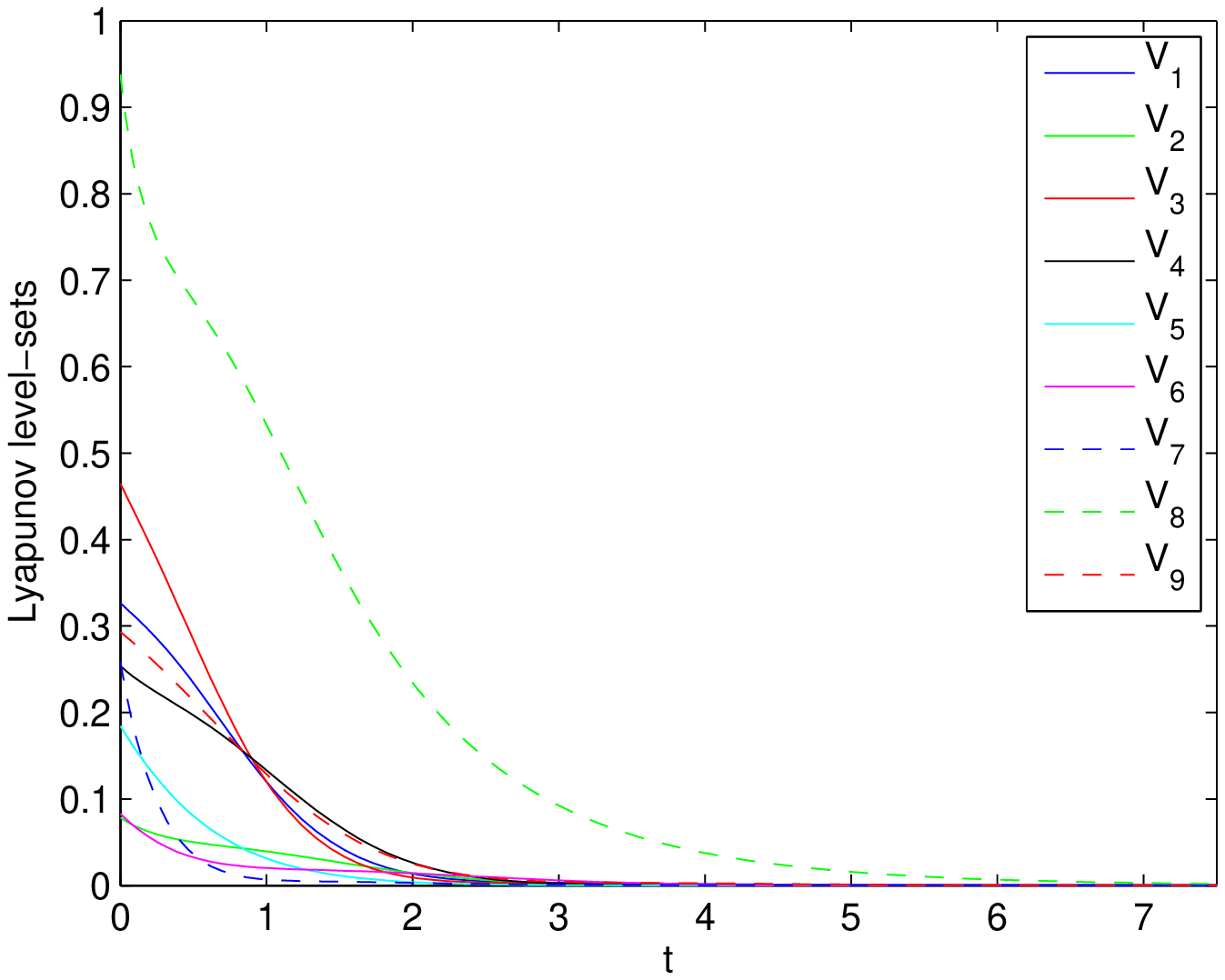}\label{F:level_42_control}
}
\caption[Optional caption for list of figures]{System states and Lyapunov functions with the same initial condition, after application of distributed stabilizing control.}
\label{F:post_control}
\end{figure*}

\section{Conclusion}\label{S:concl}
The paper presents a distributed control strategy in which agents (subsystems) coordinate with their immediate neighbors to compute optimal local control strategies that exponentially stabilize the full nonlinear network. The proposed algorithm can be easily scalable to very large-scale, sparse, interconnected systems. Future work will explore ways to make the algorithm less conservative. One such way is to use a hierarchical two-level multi-agent control scheme, where the agents exchange some minimal information with a higher-level central agent. The central agent can perform minimal computations such as checking if the comparison matrix is Hurwitz (instead of the diagonally-dominant condition). Higher order polynomials for the subsystem Lyapunov functions could be used for potentially improved control design. It would be interesting to apply the proposed algorithm on some real-world system models, such as a network preserving power system network.



\bibliography{references,ifacconf}             


\end{document}